\tikzstyle{decision} = [diamond, draw, fill=blue!20, 
\tikzstyle{block} = [rectangle, draw, fill=blue!20, 
\tikzstyle{line} = [draw, -latex']
\tikzstyle{cloud} = [draw, ellipse,fill=red!20, node distance=3cm,
\newcommand{\rt}{\mathrm{T}}
\tikzset{main node/.style={circle,fill=blue!20,draw,minimum size=1cm,inner sep=0pt},  }
\begin{document}
\title[Transport $f$-divergences]{Transport $f$-Divergences}
\author[Li]{Wuchen Li}
\email{wuchen@mailbox.sc.edu}
\address{Department of Mathematics, University of South Carolina, Columbia.}
\begin{abstract}
We define a class of divergences to measure differences between probability density functions in one-dimensional sample space. The construction is based on the convex function with the Jacobi operator of mapping function that pushforwards one density to the other. We call these information measures {\em transport $f$-divergences}. We present several properties of transport $f$-divergences, including invariances, convexities, variational formulations, and Taylor expansions in terms of mapping functions. Examples of transport $f$-divergences in generative models are provided. 
\end{abstract}
\keywords{Transport $f$-divergences; Optimal transport; Chain rules.} 
\maketitle

\section{Introduction} 
Measuring dissimilarities between probability densities are crucial problems in machine learning \cite{CA} and Bayesian inference problems \cite{GAN}. The information theory studies these dissimilarity functionals \cite{IG2, CoverThomas1991_elements}. In this area, $f$-divergences invented by Csiszar-Morimoto \cite{CS} and Ali-Silvey \cite{AS}, belong to a class of information measurements. Famous examples of $f$-divergences include total variation (TV) distance, $\chi^2$-divergence, Kullback-Leibler (KL) divergence, Jensen-Shannon (JS) divergence \cite{SS} and $\alpha$-divergences \cite{IG1,CA}. They have been widely applied in image and signal processing \cite{Nielsen}, Markov Chain Monte Carlo (MCMC) sampling algorithms \cite{LiHess1}, Bayesian inverse problems, and generative modeling in artificial intelligence (Generative AI) \cite{GAN} .    

In recent years, optimal transport \cite{Villani2009_optimal} studies the other types of distances between probability densities, which has shown popularities in image or signal processing \cite{Nielsen} and generative AI \cite{ACB}. In this area, the distance function, called earth mover's distance or Wasserstein distance, also measures the differences between probability densities by comparing pushforward mapping functions between densities from a ground cost in the sample space. It is known that the mapping function can measure the probability densities with sparse support, such as empirical data distributions, while classical information divergences may not be well-defined \cite{ACB, GAN}. In particular, optimal transport studies a Riemannian metric in probability densities space, namely Wasserstein-$2$ space \cite{AGS, Villani2009_optimal} or density manifold \cite{Lafferty}. Recently, the convexities of information entropy functionals in Wasserstein-$2$ space are used in information-theoretical inequalities \cite{AGS,ST}.  

The asymmetry between probability densities is a property in studying information measures and their variational problems. In $f$-divergence, the asymmetry property is constructed from the ratio between two probability density functions, called likelihood functions. However, the asymmetry property is often lacking in classical optimal transport distances, especially with Euclidean distances as ground costs. A natural question arises. {\em What are analogs of asymmetry property and $f$-divergences in Wasserstein-$2$ space?} 

This paper defines a class of divergences in one-dimensional probability space. Let $p$, $q$ be probability densities supported on $\Omega=\mathbb{R}^1$. Consider 
\begin{equation*}
\mathrm{D}_{\mathrm{T}, f}(p\|q)=\int_\Omega f(\frac{q(x)}{p(T(x))})q(x)dx,
\end{equation*}
where $f\colon \mathbb{R} \rightarrow \mathbb{R}_+$ is a positive convex function with $f(1)=0$, and $T\colon \Omega \rightarrow\Omega$ is a monotone function, which pushforwards densities $q$ to $p$. We call $\mathrm{D}_{\mathrm{T}, f}$ the {\em transport $f$-divergence.} We apply the ratio between $q$ and $p(T)$ to represent the likelihood function. Several properties of transport $f$-divergences are studied, including invariances, convexities, variational formulations, local behaviors, and Taylor expansions in Wasserstein-$2$ space. We also provide several examples of transport $f$-divergences and their formulas in generative models. 

In literature, there are joint studies between information divergences and optimal transport distances \cite{BB,RGAN, KM,ST}. On the one hand, \cite{ST} applies the second-order derivatives of information divergences in Wasserstein-2 space to prove the first-order entropy power inequalities and their generalizations. On the other hand, the analytical and statistical estimation properties of Wasserstein-$2$ distances have been conducted in Gaussian distributions \cite{KM}. Compared to previous works, we define transport $f$-divergences, which generalizes KL divergences, Hessian distances, and $\alpha$-divergences in Wasserstein-$2$ space \cite{LiHess1, LiD, LiHess2}. 

This paper is organized as follows. In section \ref{sec2}, we briefly review $f$-divergences and their properties. In section \ref{sec3}, we introduce the main result of this paper. We define transport $f$-divergences and formulate their properties. We present several examples of transport $f$-divergences in section \ref{sec4}. Several analytical formulas of transport $f$-divergences in location-scale families and generative models are provided in section \ref{sec5}.

\section{Review of $f$-divergences}\label{sec2}
 In this section, we briefly review $f$-divergences \cite{AS, CS}, which measures the difference between probability distributions.  
 
 Consider a one-dimensional sample space $\Omega=\mathbb{R}^1$. Denote the space of smooth positive probability density functions by
\begin{equation*}
\mathcal{P}(\Omega)=\Big\{p\in C^{\infty}(\Omega)\colon \int_\Omega p(x)dx=1,~p(x)> 0\Big\}.
\end{equation*}
Given two probability density functions $p, q\in\mathcal{P}(\Omega)$, define $f$-divergence $\mathrm{D}_f\colon\mathcal{P}(\Omega)\times\mathcal{P}(\Omega)\rightarrow\mathbb{R}_+$ by
\begin{equation*}
\mathrm{D}_f(p\|q)=\int_\Omega f(\frac{p(x)}{q(x)})q(x)dx,
\end{equation*}
where $f\colon \mathbb{R}\rightarrow\mathbb{R}_+$ is a convex function with $f(1)=0$. In general, $\mathrm{D}_f(p\|q)$ is not symmetric with respect to densities $p$, $q$. I.e., $\mathrm{D}_{f}(p\|q)\neq \mathrm{D}_f(q\|p)$. From this reason, we call $\mathrm{D}_f$ the divergence function, instead of the distance function. 

There are several examples of $f$-divergences. 
\begin{itemize}
\item[(1)] Total variation: If $f(u)=|u-1|$, then 
\begin{equation*}
\mathrm{D}_f(p\|q)=\mathrm{D}_{\mathrm{TV}}(p,q)=\int_\Omega |p(x)-q(x)|dx.
\end{equation*}
\item[(2)] $\chi^2$-divergence: If $f(u)=|u-1|^2$, then 
\begin{equation*}
\mathrm{D}_f(p\|q)=\mathrm{D}_{\chi^2}(p\|q)=\int_\Omega \frac{|p(x)-q(x)|^2}{q(x)}dx.
\end{equation*}
\item[(3)] Squared Hellinger distance: If $f(u)=\frac{1}{2}|1-\sqrt{u}|^2$, then 
\begin{equation*}
\mathrm{D}_f(p\|q)=\frac{1}{2}\int_\Omega (\sqrt{p(x)}-\sqrt{q(x)})^2dx.
\end{equation*}
\item[(4)] KL divergence: If $f(u)=u\log u-(u-1)$, then
\begin{equation*}
\mathrm{D}_f(p\|q)=\mathrm{D}_{\mathrm{KL}}(p\|q)=\int_\Omega p(x)\log\frac{p(x)}{q(x)}dx.
\end{equation*}
\item[(5)] JS divergence: If $f(u)=\frac{1}{2}(u\log\frac{2u}{u+1}+\log\frac{2}{u+1})$, then 
\begin{equation*}
\mathrm{D}_{f}(p\|q)=\mathrm{D}_\mathrm{JS}(p,q)=\frac{1}{2}\Big(\mathrm{D}_{\mathrm{KL}}(p\|\frac{p+q}{2})+\mathrm{D}_{\mathrm{KL}}(q\|\frac{p+q}{2})\Big). 
\end{equation*}
\end{itemize}

The $f$-divergences exhibit several useful properties in estimations and AI sampling algorithms; see \cite{AC,SS}. 
\begin{itemize}
\item[(i)] Nonnegativity: The $f$-divergence is always nonnegative and equals to zero if and only if $p=q$. 
\item[(ii)] Generalized entropy: Let $q=1$. 
\begin{equation*}
\mathrm{D}_f(p\|1)=\int_\Omega f(p(x))dx. 
\end{equation*}
\item[(iii)] Joint convexity: $\mathrm{D}_f(p\|q)$ is jointly convex in both variables $p$ and $q$. Given any constant $\lambda\in [0,1]$, then 
\begin{equation*}
\mathrm{D}_f(\lambda p_1+(1-\lambda)p_2\|\lambda q_1+(1-\lambda)q_2)\leq \lambda \mathrm{D}_f(p_1\|q_1)+(1-\lambda)\mathrm{D}_f(p_2\|q_2).
\end{equation*}
\item[(iv)] Additivity and Scaling: Suppose that $f_1$, $f_2$ are convex functions and $a>0$, then 
\begin{equation*}
\mathrm{D}_{f_1+af_2}(p\|q)=\mathrm{D}_{f_1}(p\|q)+a\mathrm{D}_{f_2}(p\|q).
\end{equation*}
\item[(v)] Invariance: The $f$-divergence is invariant to bijective transformations. Suppose $k\colon \Omega\rightarrow\Omega$ is a bijective map function, then 
\begin{equation*}
\mathrm{D}_f(p\|q)=\mathrm{D}_f(k_\#p\|k_\#q).
\end{equation*}
Besides, denote $\tilde f(u)=f(\frac{1}{u})u$, then $\mathrm{D}_f(p\|q)=\mathrm{D}_{\tilde f}(q\|p)$.
\item[(vi)] Variational formulation: Assume $f$ is strictly convex, then
\begin{equation*}
\begin{split}
\mathrm{D}_f(p\|q)=&\sup_{\phi}~\int_\Omega\phi(x)p(x)dx-\int_\Omega f^*(\phi(x))q(x)dx,
\end{split}
\end{equation*}
where the infimum is taken among continuous function $\phi\in C^{1}(\Omega; \mathbb{R})$, and $f^*$ is the conjugate function of $f$, such that $f^*(v):=\sup_{v\in \mathbb{R}}~\{uv-f(u)\}$.

\item[(vii)] Local behaviors: Suppose $f\in C^{2}$, then 
\begin{equation*}
\lim_{\lambda\rightarrow 0}\frac{1}{\lambda^2}\mathrm{D}_f\big((1-\lambda)q+\lambda p\|q\big)=\frac{f''(1)}{2}\mathrm{D}_{\chi^2}(p\|q).
\end{equation*}
\item[(viii)] Taylor expansions: Suppose $f\in C^4$ and $f'(1)=0$, then  
\begin{equation*}
\begin{split}
\mathrm{D}_f(p\|q)=&\int_\Omega\Big[\frac{1}{2}f''(1) \frac{(p(x)-q(x))^2}{q(x)}+\frac{1}{6}f'''(1)\frac{(p(x)-q(x))^3}{q(x)^2}\Big]dx\\
&+O(\int_\Omega \frac{(p(x)-q(x))^4}{q(x)^{3}}dx).
\end{split}
\end{equation*}

\end{itemize}
\section{Transport $f$-divergences}\label{sec3}
In this section, we recall some facts in optimal transport. Using them, we formulate $f$-divergences in terms of optimal transport mapping functions. We call them {\em transport $f$-divergences}. We demonstrate several properties of transport $f$-divergences, including invariances, dualities, local behaviors and Taylor expansions in Wasserstein-2 spaces.   
 \subsection{Review of Wasserstein-$2$ distances}
 We first review the definition of optimal transport mapping functions in one-dimensional sample space. See \cite[Formula (6.0.3)] {AGS}.

For any two probability densities $p$, $q\in\mathcal{P}(\Omega)$ with finite second moments, the Wasserstein-$2$ distance is defined by: 
\begin{equation}\label{OT}
\textrm{W}_2(p,q):=\inf_{T}\quad\sqrt{\int_\Omega |T(x)-x|^2q(x)dx},
\end{equation}
where the infimum is taken over all continuous mapping function $T\colon \Omega\rightarrow\Omega$ that pushforwards $q$ to $p$. We also write the pushforward operation by $T_\#q=p$, which represents that the Monge-Amper{\'e} equation holds:  
\begin{equation}\label{MA}
p(T(x))\cdot T'(x)=q(x).
\end{equation}

The optimal mapping $T$ is a monotone function with closed form formulas. Denote the cumulative distribution functions (CDFs) $F_p$, $F_q$ of probability density function $p$, $q$, respectively, such that 
\begin{equation*}
F_p(x)=\int_{-\infty}^xp(y)dy, \quad F_q(x)=\int_{-\infty}^xq(y)dy. 
\end{equation*}
Denote the quantile functions of probability densities $p$, $q$ by 
\begin{equation*}
Q_p(u)=F_p^{-1}(u),\qquad Q_q(u)=F_q^{-1}(u).
\end{equation*}
where we denote $F_p^{-1}$, $F_q^{-1}$ are inverse CDFs of $p$, $q$, respectively. From the integration on both sides of equation \eqref{MA} with respect to $x$, then 
\begin{equation*}
F_p(T(x))=F_q(x).
\end{equation*} 
Thus, the optimal transport mapping function satisfies 
\begin{equation}\label{sMA}
T(x):=F_p^{-1}(F_q(x))=Q_p(F_q(x)). 
\end{equation}
Equivalently, the squared Wasserstein-$2$ distance satisfies  
\begin{equation*}
\begin{split}
\textrm{W}_2(p,q)^2=&{\int_\Omega |Q_p(F_q(x))-x|^2q(x)dx}\\
=&{\int_0^1|Q_p(u)-Q_q(u)|^2du},
\end{split}
\end{equation*}
 where we denote $u=F_q(x)$ with $u\in [0,1]$. 

There is a Kantorovich formulation and duality formula for the Wasserstein-$2$ distance \eqref{OT}. We can write the linear programming formulation of one-half squared Wasserstein-2 distance: 
\begin{equation*}
\frac{1}{2}\textrm{W}_2(p,q)^2=\inf_{\pi} \int_{\Omega}\int_\Omega\frac{1}{2}|x-y|^2\pi(x,y)dxdy,
\end{equation*}
where the infimum is taken among all joint distributions $\pi\in L^1(\Omega^2; \mathbb{R})$ with marginal densities $p$, $q$, respectively, such that 
\begin{equation*}
\int_\Omega \pi(x,y)dx=p(y), \quad \int_\Omega \pi(x,y)dy=q(x),\quad \pi(x,y)\geq 0.
\end{equation*}
The Kantorovich duality formula means that the Wasserstein-2 distance can be represented by: 
\begin{equation*}
\frac{1}{2}\textrm{W}_2(p,q)^2=\int_\Omega \Phi_1(y)p(y)dy-\int_\Omega \Phi_0(x)q(x)dx, 
\end{equation*}
where $\Phi_0$, $\Phi_1\in C(\Omega; \mathbb{R})$ are a pair of functions, Kantorovich duality variables, corresponding to densities $q$, $p$, respectively, such that 
\begin{equation*}
\Phi_1'(T(x))=\Phi'_0(x)=T(x)-x.
\end{equation*}
By taking the integration of the above formula, we have
\begin{equation}\label{Phi}
\Phi_0(x)=\int_{0}^xT(y)dy-\frac{|x|^2}{2}+c_0, \quad \Phi_1(x)=\frac{|x|^2}{2}-\int_{0}^x T^{-1}(y)dy+c_1.
\end{equation}
Here $c_0$, $c_1\in\mathbb{R}$ are constants, and $T^{-1}$ is the inverse function of the optimal mapping function. From equation \eqref{sMA}, $T^{-1}(x)=Q_q(F_p(x))$. 

\subsection{Transport $f$-divergences}
In this subsection, we define $f$-divergences in Wasserstein-$2$ space. 
\begin{definition}[Transport $f$-divergence] 
Given a positive convex function $f\colon\Omega\rightarrow\mathbb{R}_+$, such that $f(1)=0$. Define 
a functional $\mathrm{D}_{\rt,f}\colon \mathcal{P}(\Omega)\times\mathcal{P}(\Omega)\rightarrow\mathbb{R}_+$, such that
\begin{equation}\label{f-div-1}
\begin{split}
\mathrm{D}_{\rt, f}(p\|q)=&\int_\Omega f(\frac{q(x)}{p(T(x))}) q(x)dx,
\end{split}
\end{equation}
where $T$ is the monotone mapping function, such that $T_\#q=p$. We call $\mathrm{D}_{\rt,f}$ the {\em transport $f$-divergence}.
\end{definition}
We first present several equivalent formulations of transport $f$-divergences.
\begin{proposition}[Equivalent formulations]\label{prop1}
The following equivalent formulations hold: 
\begin{itemize}
\item[(i)]\begin{equation}\label{f-div-3}
\begin{split}
\mathrm{D}_{\rt,f}(p\|q)=&\int_\Omega f(T'(x))q(x)dx\\
=&\int_\Omega f(\frac{q(x)}{p(T(x))})q(x)dx\\
=&\int_\Omega f(\frac{q(T^{-1}(x))}{p(x)})p(x)dx.
\end{split}
\end{equation}
\item[(ii)]\begin{equation}\label{f-div}
\mathrm{D}_{\rt,f}(p\|q)=\int_0^1 f(\frac{Q'_p(u)}{Q'_q(u)})du.
\end{equation}
Here $Q'_p(u)=\frac{d}{du}Q_p(u)$, $Q'_q(u)=\frac{d}{du}Q_q(u)$ are quantile density functions of densities $p$, $q$, respectively. 
\item[(iii)] Let $p_{\mathrm{ref}}\in\mathcal{P}(\Omega)$ be a reference measure. Let $T_p$, $T_q\colon \Omega\rightarrow\Omega$ be the monotone mapping functions, which pushforward probability densities $p$, $q$ to $p_{\mathrm{ref}}$, respectively. I.e., $p=(T_{p})_\# p_{\mathrm{ref}}$ and $q=(T_{q})_\#p_{\mathrm{ref}}$. Then 
\begin{equation}\label{f-div-2}
\mathrm{D}_{\rt,f}(p\|q)=\int_\Omega f(\frac{T'_p(z)}{T'_q(z)}) p_{\mathrm{ref}}(z)dz.
\end{equation}
\end{itemize}
\end{proposition}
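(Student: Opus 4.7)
The plan is to obtain all three equivalent formulas as direct consequences of the Monge--Amp\`ere equation \eqref{MA} together with repeated applications of change of variables. For part (i), equation \eqref{MA} gives $T'(x)=q(x)/p(T(x))$, so the first two lines of \eqref{f-div-3} are literally the same integral written in two notations. For the third line of \eqref{f-div-3}, I would apply the substitution $y=T(x)$, under which $dy=T'(x)\,dx$ and $q(x)\,dx=p(T(x))T'(x)\,dx=p(y)\,dy$ by \eqref{MA}; replacing $x=T^{-1}(y)$ in the integrand then yields the claimed identity.

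For part (ii), I will use the closed-form representation $T(x)=Q_p(F_q(x))$ from \eqref{sMA}. Differentiating gives $T'(x)=Q_p'(F_q(x))\,q(x)$, while differentiating the identity $Q_q(F_q(x))=x$ gives $Q_q'(F_q(x))\,q(x)=1$. Dividing the two relations produces $T'(x)=Q_p'(F_q(x))/Q_q'(F_q(x))$. The substitution $u=F_q(x)$, so that $du=q(x)\,dx$ and $u$ traverses $[0,1]$, then converts $\int_{\Omega}f(T'(x))q(x)\,dx$ into $\int_0^1 f(Q_p'(u)/Q_q'(u))\,du$, which is \eqref{f-div}.

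For part (iii), the key observation is that if $(T_p)_\#p_{\mathrm{ref}}=p$ and $(T_q)_\#p_{\mathrm{ref}}=q$, then $T:=T_p\circ T_q^{-1}$ is the monotone map pushing $q$ forward to $p$, so it coincides with the optimal map featured in \eqref{f-div-1}. The chain rule combined with the inverse function theorem gives $T'(x)=T_p'(T_q^{-1}(x))/T_q'(T_q^{-1}(x))$. The change of variables $z=T_q^{-1}(x)$ then transforms $q(x)\,dx$ into $p_{\mathrm{ref}}(z)\,dz$ by the Monge--Amp\`ere relation for $T_q$, and the integrand becomes $f(T_p'(z)/T_q'(z))$, which is \eqref{f-div-2}. (Alternatively, \eqref{f-div-2} can be derived from \eqref{f-div} by applying the substitution $u=F_{p_{\mathrm{ref}}}(z)$ in the opposite direction, using $T_p=Q_p\circ F_{p_{\mathrm{ref}}}$ and the analogous identity for $T_q$.)

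I do not anticipate any substantive obstacle: the three formulas are all manifestations of the same one-dimensional change of variables. The only point requiring some care is justifying the smoothness needed for the chain rule and the inverse function theorem, together with the fact that $F_q$ is a diffeomorphism of $\Omega$ onto $(0,1)$; both follow from the standing hypothesis $p,q,p_{\mathrm{ref}}\in\mathcal{P}(\Omega)$ of smooth strictly positive densities, which forces $T$, $T_p$, $T_q$ and their inverses to be smooth strictly increasing maps.
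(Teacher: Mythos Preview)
Your proposal is correct and follows essentially the same route as the paper's proof: all three parts rest on the Monge--Amp\`ere relation \eqref{MA} plus the substitutions $y=T(x)$, $u=F_q(x)$, and $x=T_q(z)$ (equivalently $z=T_q^{-1}(x)$), together with the chain-rule identity $T'(x)=T_p'(z)/T_q'(z)$. The only cosmetic difference is that in (iii) the paper reaches $T\circ T_q=T_p$ via the explicit quantile formulas $T_p=Q_p\circ F_{p_{\mathrm{ref}}}$, $T_q=Q_q\circ F_{p_{\mathrm{ref}}}$, whereas you argue directly that $T_p\circ T_q^{-1}$ is the monotone pushforward from $q$ to $p$; the two arguments are equivalent.
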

\begin{proof} 
\noindent(i) The optimal transport mapping function is monotone. From the Monge-Amper{\'e} equation \eqref{MA}, we have
\begin{equation*}
\mathrm{D}_{T,f}(p\|q)=\int_\Omega f( T'(x))q(x)dx=\int_\Omega f(\frac{q(x)}{p(T(x))})q(x)dx.
\end{equation*}
Similarly, denote $(T^{-1})_\#p=q$, then the following Monge-Amper{\'e} equation holds:
\begin{equation*}
q(T^{-1}(x))\frac{d}{dx}T^{-1}(x)=p(x).
\end{equation*}
Let $x=T^{-1}(\tilde x)$, then 
\begin{equation*}
\begin{split}
\mathrm{D}_{T,f}(p\|q)=&\int_\Omega f(\frac{q(x)}{p(T(x))})q(x)dx\\
=&\int_\Omega f(\frac{q(T^{-1}(\tilde x))}{p(\tilde x)})q(T^{-1}(\tilde x))dT^{-1}(\tilde x)\\
=&\int_\Omega f(\frac{q(T^{-1}(\tilde x))}{p(\tilde x)})p(\tilde x)dx. 
\end{split}
\end{equation*}
Hence we derive the result.

\noindent(ii) Denote the change of variable formula by
\begin{equation*}
u=F_q(x)=\int_{-\infty}^xq(y)dy.
\end{equation*}
Hence
\begin{equation*}
\frac{du}{dx}=q(x), \qquad x=Q_q(u).
\end{equation*}
From the chain rule, we have 
\begin{equation*}
q(x)=\frac{du}{dx}=({\frac{dx}{du}})^{-1}=\frac{1}{Q'_q(u)}.
\end{equation*}
Thus 
\begin{equation*}
\begin{split}
\frac{Q'_p(u)}{Q'_q(u)}=&\frac{d}{du}Q_p(F_q(x))\cdot\frac{du}{dx}\\
=&\frac{d}{dx}Q_p(F_q(x))\\
=& T'(x),
\end{split}
\end{equation*}
where the second equality holds by the chain rule, and the last equality holds because $T(x)=Q_p(F_q(x))$.
Substituting the above calculations into \eqref{f-div-1}, we derive \eqref{f-div}.

\noindent(iii) Denote the change of variable formula by
\begin{equation*}
x=T_q(z).
\end{equation*}
Since $(T_{q})_\#p_{\mathrm{ref}}=q$, then 
\begin{equation*}
p_{\mathrm{ref}}(z)=q(T_q(z))T'_q(z).
\end{equation*}
In other words, 
\begin{equation*}
p_{\mathrm{ref}}(z)dz=q(x)dx.
\end{equation*}
Note that 
\begin{equation*}
T_p(z)=Q_p(F_{p_\mathrm{ref}}(z)),\quad T_q(z)=Q_q(F_{p_\mathrm{ref}}(z)).
\end{equation*}
Hence 
\begin{equation*}
T(x)=T(T_q(z))=T_p(z), 
\end{equation*}
and 
\begin{equation*}
\begin{split}
 T'(x)=&\frac{d}{dz} T(T_q(z))\cdot \frac{dz}{dx}\\
=&\frac{d}{dz} T_p(z)\cdot (\frac{dx}{dz})^{-1}\\
=&\frac{T'_p(z)}{T'_q(z)}.
\end{split}
\end{equation*}
Substituting the above calculations into \eqref{f-div-1}, we derive formulation \eqref{f-div-2}. 
\end{proof}
\subsection{Properties}
We next present several properties of transport $f$-divergences.  
\begin{proposition}[Properties]
The following properties hold:
\begin{itemize}
\item[(i)] Nonnegativity: The transport $f$-divergence is nonnegative:
\begin{equation*}
\mathrm{D}_{\rt,f}(p\|q)\geq 0.
\end{equation*}
And $\mathrm{D}_{\rt,f}(p\|q)=0$ if and only if $p$ equals to $q$ up to a constant shrift in their variables. I.e., there exists a constant $c\in\mathbb{R}$, such that 
\begin{equation*}
p(x+c)=q(x).
\end{equation*}
\item[(ii)] Entropy: Let $\Omega=[0,1]$ and $q(x)=1$. Then the transport $f$-divergence equals to the following negative entropy. 
\begin{equation*}
\mathrm{D}_{\rt,f}(p\|1)=\int_\Omega f(\frac{1}{p(x)})p(x)dx.
\end{equation*}
\item[(iii)] Additivity and scaling: Suppose $f_1$, $f_2$ are convex functions and $a>0$, then 
\begin{equation*}
\mathrm{D}_{\rt,f_1+af_2}(p\|q)=\mathrm{D}_{\rt,f_1}(p\|q)+a\mathrm{D}_{\rt,f_2}(p\|q).
\end{equation*}
\item[(iv)] Duality:
\begin{equation*}
\mathrm{D}_{\rt,f}(p\|q)=\mathrm{D}_{\rt,\hat f}(q\|p),
\end{equation*}
where $\hat f(u)=f(\frac{1}{u})$. 
\item[(v)] Transport invariance: Let $k\colon\Omega\rightarrow\Omega$ be a smooth inverse mapping function. Denote $k^{-1}$ be the inverse function of $k$. 
Let 
\begin{equation*}
\tilde q=(k^{-1})_\#q,
\end{equation*}
and 
\begin{equation*}
\tilde p=\tilde T_\# \tilde q,
\end{equation*}
with 
\begin{equation}\label{tt}
\tilde T(x)=\int_{0}^xT'(k(y))dy+c,  
\end{equation}
for any constant $c\in\mathbb{R}$.
Then 
\begin{equation*}
\mathrm{D}_{\rt,f}(p\|q)=\mathrm{D}_{\rt,f}(\tilde p\|\tilde q).
\end{equation*}
\item[(vi)] Transport convexity in the first variable: Given any densities $p_1, p_2, q\in\mathcal{P}(\Omega)$ with $p_1={T_1}\# q$ and $p_2={T_2}_\# q$, where $T_1$, $T_2$ are two monotone mapping functions, respectively. Denote 
\begin{equation*}
p_\lambda=\Big(\lambda T_1+(1-\lambda)T_{2}\Big)_\# q. 
\end{equation*}
Then for any constant $\lambda\in [0,1]$, we have
\begin{equation*}
\mathrm{D}_{\rt,f}(p_\lambda\|q)\leq \lambda \mathrm{D}_{\rt,f}(p_1\|q)+(1-\lambda)\mathrm{D}_{\rt,f}(p_2\|q).
\end{equation*}
\end{itemize}
\end{proposition}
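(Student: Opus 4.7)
The plan is to reduce every property to the Jacobian formulation $\mathrm{D}_{\rt,f}(p\|q)=\int_\Omega f(T'(x))\,q(x)\,dx$ from Proposition~\ref{prop1}(i), so that each claim becomes an elementary statement about the derivative of a monotone mapping.

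Parts (i)--(iv) are short. Nonnegativity in (i) is immediate from $f\geq 0$; for the equality case, a nonnegative convex $f$ with $f(1)=0$ attains its minimum at $u=1$, so $\mathrm{D}_{\rt,f}(p\|q)=0$ forces $T'(x)=1$ almost everywhere, i.e.\ $T(x)=x+c$, and then $T_\# q=p$ reads $p(x+c)=q(x)$. For (ii), setting $\Omega=[0,1]$ and $q\equiv 1$ gives $T=Q_p$ and $T'(x)=1/p(T(x))$ by the Monge-Amp\`ere identity \eqref{MA}; the change of variables $y=T(x)$ with $dx=p(y)\,dy$ converts the defining integral into $\int f(1/p(y))\,p(y)\,dy$. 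Additivity and scaling (iii) is pure linearity of the integral. For the duality (iv), I would use that $S:=T^{-1}$ pushes $p$ forward to $q$ and $S'(y)=1/T'(S(y))$; substituting $x=S(y)$ together with $p(y)\,dy=q(x)\,dx$ identifies $\mathrm{D}_{\rt,\hat f}(q\|p)$ with $\mathrm{D}_{\rt,f}(p\|q)$, where $\hat f(u)=f(1/u)$ enters precisely to absorb the reciprocal Jacobian.

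The step requiring the most care is the transport invariance (v). The definition $\tilde T(x)=\int_0^x T'(k(y))\,dy+c$ is engineered precisely so that $\tilde T'(y)=T'(k(y))$; since $T'>0$ and $k$ is an increasing diffeomorphism, $\tilde T$ is monotone, and the integration constant $c$ is irrelevant because only $\tilde T'$ enters the divergence. From $\tilde q=(k^{-1})_\# q$ the change-of-variables formula gives $\tilde q(y)=q(k(y))\,k'(y)$, so under the substitution $x=k(y)$ we have $\tilde q(y)\,dy=q(x)\,dx$. Substituting into \eqref{f-div-3} yields
\begin{equation*}
\mathrm{D}_{\rt,f}(\tilde p\|\tilde q)=\int_\Omega f(\tilde T'(y))\,\tilde q(y)\,dy=\int_\Omega f(T'(k(y)))\,q(k(y))\,k'(y)\,dy=\int_\Omega f(T'(x))\,q(x)\,dx,
\end{equation*}
which equals $\mathrm{D}_{\rt,f}(p\|q)$. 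One small sanity check is that $\tilde T_\#\tilde q$ does recover the density $\tilde p$ produced by this Jacobian, but this is automatic since the construction fixes the Jacobian of $\tilde T$ and the pushforward is then determined.

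For the convexity (vi), the choice $p_\lambda=(\lambda T_1+(1-\lambda)T_2)_\# q$ linearises the Jacobian: $T_\lambda'=\lambda T_1'+(1-\lambda)T_2'$, and the convex combination of two monotone maps is monotone, so $\lambda T_1+(1-\lambda)T_2$ is admissible as an optimal transport map. Pointwise convexity of $f$ then gives $f(T_\lambda'(x))\leq \lambda f(T_1'(x))+(1-\lambda)f(T_2'(x))$; integrating against $q(x)\,dx$ yields the claimed inequality. Of the six items, the bookkeeping with the two pushforwards and the Jacobian in (v) is the only step that is not a one-line convexity or change-of-variables argument.
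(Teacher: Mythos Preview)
Your proposal is correct and follows essentially the same approach as the paper: reduce everything to the Jacobian formulation $\int_\Omega f(T'(x))\,q(x)\,dx$ and argue pointwise. Parts (i), (ii), (iii), (v), (vi) match the paper's arguments almost line for line.

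The one place you diverge is (iv). The paper proves duality via the reference-measure formulation \eqref{f-div-2} from Proposition~\ref{prop1}(iii): writing both sides as integrals against $p_{\mathrm{ref}}$ with the ratios $T_p'/T_q'$ and $T_q'/T_p'$, the identity $\hat f(u)=f(1/u)$ becomes a one-line swap. You instead work directly with the inverse map $S=T^{-1}$, using $S'(y)=1/T'(S(y))$ and the change of variables $x=S(y)$ together with $p(y)\,dy=q(x)\,dx$. Both arguments are equally short; yours is slightly more self-contained since it avoids the auxiliary reference measure, while the paper's version makes the symmetry in $p$ and $q$ visually transparent.
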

\begin{proof}
The proofs are based on the definitions of transport $f$-divergences. 

\noindent(i) The nonnegativity follows from the fact that $f$ is a positive convex function. Suppose $\mathrm{D}_{\rt,f}(p\|q)=0$.  Following \eqref{f-div-1}, we have 
$f( T'(x))=0$, for $x$ in the support of $q(x)$. In other words, $T'(x)=1$, i.e. $T(x)=x+c$, where $c\in\mathbb{R}$ is a constant value. From $p=T_\# q$, we derive the result.   

\noindent(ii) Denote $T_\# q=p$ and $q(x)=1$, then  
\begin{equation*}
\begin{split}
\mathrm{D}_{\rt,f}(p\|q)=&\int_\Omega f(\frac{1}{p(T(x))})dx\\
=&\int_\Omega f(\frac{1}{p(T(x))})p(T(x))T'(x)dx\\
=&\int_\Omega f(\frac{1}{p(T(x))})p(T(x))dT(x).
\end{split}
\end{equation*}
Denote $y=T(x)$, then $\mathrm{D}_{\rt,f}(p\|q)=\int_\Omega f(\frac{1}{p(y)})p(y)dy$.

\noindent(iii) The additivity and scaling property holds from the definition:  
\begin{equation*}
\mathrm{D}_{\rt, f_1+af_2}(p\|q)=\int_\Omega \Big(f_1( T'(x))+af_2( T'(x))\Big) q(x)dx=\mathrm{D}_{\rt, f_1}(p\|q)+a\mathrm{D}_{\rt,f_2}(p\|q).
\end{equation*}
(iv) The proof follows from the formulation of transport $f$-divergence defined in \eqref{f-div-2}. Note
\begin{equation*}
\begin{split}	
\mathrm{D}_{\rt,f}(p\|q)=&\int_\Omega f(\frac{T'_p(z)}{T'_q(z)}) p_{\mathrm{ref}}(z)dz\\
=&\int_\Omega \hat f\Big(\frac{T'_q(z)}{T'_p(z)}\Big) p_{\mathrm{ref}}(z)dz.
\end{split}
\end{equation*}
\noindent(v) The proof follows from the definition of pushforward operator. From $\tilde q=(k^{-1})_\#q$ and $q=k_\#\tilde q$, we obtain 
\begin{equation*}
q(k(y))k'(y)=\tilde q(y).
\end{equation*}
From the definition of $\tilde T$ in \eqref{tt}, we have 
\begin{equation*}
 \frac{d}{dx}\tilde T(x)=\frac{d}{dx}\int_{0}^xT'(k(y))dy=T'(k(x)).
\end{equation*}
Following the above two equalities, we have
\begin{equation*}
\begin{split}
\mathrm{D}_{\rt,f}(\tilde p\|\tilde q)=&\int_\Omega f(T'(k(y))) \tilde q(y)dy\\
=&\int_\Omega f(T'(k(y))) q(k(y))k'(y)dy\\
=&\int_\Omega f(T'(k(y)))q(k(y))dk(y)\\
=&\int_\Omega f( T'(x))q(x)dx\\
=&\mathrm{D}_{\rt,f}(p\|q),
\end{split}
\end{equation*}
where we let $x=k(y)$ in the fourth equality.

\noindent(vi) The proof follows directly from the convexity of function $f$. Notice that
\begin{equation*}
\begin{split}
\mathrm{D}_{f, \rt}(p_\lambda\|q)=&\int_\Omega f(\lambda T'_{1}(x)+(1-\lambda)T'_{2}(x))q(x)dx\\
\leq &  \int_\Omega \Big(\lambda f(T'_{1}(x))+(1-\lambda)f(T'_{2}(x))\Big)q(x)dx\\
=&\lambda \mathrm{D}_{\rt,f}(p_1\|q)+(1-\lambda)\mathrm{D}_{\rt,f}(p_2\|q).
\end{split}
\end{equation*}
\end{proof}
\subsection{Variational formulations}
In this subsection, we present variational formulations for transport $f$-divergences. 
\begin{theorem}[Transport $f$-dualities]
Denote
\begin{equation*}
\hat f(u)=f(\frac{1}{u}). 
\end{equation*}
 Denote $\hat f^*$ as the conjugate function of $\hat f$, with $\hat f^*(v)=\sup_{u\in\mathbb{R}}\big\{uv-\hat f(u)\big\}$.
Assume $\hat f$ is strictly convex with respect to the variable $u$. Then the transport $f$-divergence satisfies
\begin{equation}\label{Psi}
\mathrm{D}_{\rt, f}(p\|q)=\sup_{\Psi}~\int_\Omega \Psi(x)p(T(x))dx-\int_\Omega \hat f^*(\Psi(x))q(x)dx,
\end{equation}
where the supreme is over all continuous functions $\Psi\in C(\Omega; \mathbb{R})$, and $T$ is a monotone mapping function, such that $T_\#q=p$. The optimality condition of supreme problem \eqref{Psi} is described below. Denote the transport $f$-duality variable:
\begin{equation}\label{Psi_opt}
\Psi_{\mathrm{opt}}(x):=\hat f'(T'(x)),
\end{equation} 
Then 
\begin{equation*}
\mathrm{D}_{\rt, f}(p\|q)=\int_\Omega \Psi_{\mathrm{opt}}(x)p(T(x))dx-\int_\Omega \hat f^*(\Psi_{\mathrm{opt}}(x))q(x)dx. 
\end{equation*}
\end{theorem}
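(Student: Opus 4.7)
The plan is to reduce the variational identity to a pointwise Legendre-Fenchel duality for $\hat f$ and then integrate, following the template for the classical $f$-divergence duality in Section~\ref{sec2}(vi). The starting point is Proposition~\ref{prop1}(i), which gives $\mathrm{D}_{\rt,f}(p\|q)=\int_\Omega f(T'(x))q(x)\,dx$.

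\textbf{Rewriting step.} Using $f(z)=\hat f(1/z)$ together with the Monge-Amp\`ere relation $p(T(x))T'(x)=q(x)$, I would first put the divergence into classical ratio form:
\begin{equation*}
\mathrm{D}_{\rt,f}(p\|q)=\int_\Omega \hat f\!\left(\frac{p(T(x))}{q(x)}\right)q(x)\,dx.
\end{equation*}
This is the direct transport analog of the ``ratio times reference measure'' structure that underlies the classical $f$-divergence variational formula.

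\textbf{Pointwise duality.} Since $\hat f$ is strictly convex, for every admissible $u$,
\begin{equation*}
\hat f(u)=\sup_{\Psi\in\mathbb{R}}\bigl\{u\Psi-\hat f^*(\Psi)\bigr\},
\end{equation*}
with attainer $\Psi=\hat f'(u)$. Inserting $u=p(T(x))/q(x)$ and multiplying by $q(x)$ gives the pointwise identity
\begin{equation*}
\hat f\!\left(\frac{p(T(x))}{q(x)}\right)q(x)=\sup_{\Psi(x)\in\mathbb{R}}\bigl\{\Psi(x)p(T(x))-\hat f^*(\Psi(x))q(x)\bigr\}.
\end{equation*}
Integrating the Fenchel inequality against any continuous test function $\Psi$ immediately yields the ``$\geq$'' direction of the theorem. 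For equality I would exhibit the pointwise maximizer $\Psi_{\mathrm{opt}}(x)=\hat f'\!\left(p(T(x))/q(x)\right)$, which via Monge-Amp\`ere is expressible through $T'(x)$ as in \eqref{Psi_opt}. Substituting $\Psi_{\mathrm{opt}}$ back and using the pointwise Fenchel equality integrates exactly to $\mathrm{D}_{\rt,f}(p\|q)$, establishing the claimed optimality identity.

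\textbf{Main obstacle.} The only non-routine step is the interchange of supremum and integration, and it is handled here by the explicit continuous attainer: smoothness of $p,q\in\mathcal{P}(\Omega)$, smoothness of $T$ through the closed-form \eqref{sMA}, and $C^1$ regularity of $\hat f$ (from strict convexity) together guarantee $\Psi_{\mathrm{opt}}\in C(\Omega;\mathbb{R})$ and hence admissibility in the supremum. In this smooth one-dimensional setting no measurable-selection subtleties or growth-condition checks beyond those already implicit in $\mathcal{P}(\Omega)$ arise, so I do not anticipate a genuine technical hurdle.
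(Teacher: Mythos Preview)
Your proposal is correct and follows essentially the same route as the paper: rewrite $f(T'(x))=\hat f(1/T'(x))$, apply the pointwise Fenchel biconjugate $\hat f(u)=\sup_v\{uv-\hat f^*(v)\}$, and use the Monge--Amp\`ere relation $q(x)/T'(x)=p(T(x))$ to obtain the integrand $\Psi(x)p(T(x))-\hat f^*(\Psi(x))q(x)$; the only cosmetic difference is that you substitute $1/T'(x)=p(T(x))/q(x)$ \emph{before} invoking Fenchel duality, whereas the paper does so afterward. Your explicit remark that the continuous attainer $\Psi_{\mathrm{opt}}$ legitimizes the sup--integral interchange is slightly more careful than the paper, which simply writes the supremum inside the integral; note also that both your computation and the paper's own proof give $\Psi_{\mathrm{opt}}(x)=\hat f'(1/T'(x))=\hat f'\bigl(p(T(x))/q(x)\bigr)$, so the formula $\hat f'(T'(x))$ appearing in the theorem \emph{statement} is evidently a typo you should not try to match.
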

\begin{proof}
The proof follows from the convex conjugate of a function $\hat f$. Notice that $\hat f(u)=\sup_{v\in \mathbb{R}}\big\{uv-\hat f^*(v)\big\}$, where $u=(\hat f^*)'$. Thus, 
\begin{equation*}
\begin{split}
\mathrm{D}_{\rt, f}(p\|q)=&\int_\Omega f( T'(x))q(x)dx=\int_\Omega \hat f(\frac{1}{ T'(x)})q(x)dx\\
=&\sup_{\Psi}~~\int_\Omega \Big(\Psi(x)\cdot\frac{1}{ T'(x)}-\hat f^*(\Psi(x))\Big)q(x)dx\\
=&\sup_{\Psi}~~\int_\Omega \Psi(x)\cdot\frac{q(x)}{ T'(x)}dx-\int_\Omega \hat f^*(\Psi(x))q(x)dx\\
=&\sup_{\Psi}~~\int_\Omega \Psi(x) p(T(x))dx-\int_\Omega \hat f^*(\Psi(x))q(x)dx,
\end{split}
\end{equation*}
where the last equality holds from equation \eqref{MA}. And the optimality condition implies that 
\begin{equation*}
\frac{1}{T'(x)}=(\hat f^*)'(\Psi(x)). 
\end{equation*}
From the assumption, $\hat f$ is a strictly convex function, then $(\hat f^*)'$ is invertible. Then, we have the optimality condition $\Psi_{\mathrm{opt}}(x)=((\hat f^*)')^{-1}(\frac{1}{T'(x)})=\hat f'(\frac{1}{T'(x)})$. Substituting $\Psi_{\mathrm{opt}}$ into formulation \eqref{Psi}, we finish the proof. 
\end{proof}

We also present a relation between the transport $f$-dualities and the Kantorvich dualities in Wasserstein-2 distances. \begin{theorem}[Transport $f$-Kantorvich dualities]
The following equation holds:
\begin{equation*}
\Psi_{\mathrm{opt}}(x)=\hat f'(\frac{1}{\Phi''_0(x)+1}), 
\end{equation*}
where $\Phi_0(x)$ is the Kantorovich duality variable defined in \eqref{Phi}, and $\Psi_{\mathrm{opt}}$ is the transport $f$-duality variable defined in \eqref{Psi_opt}. In addition, the transport $f$-divergence is reformulated below:
\begin{equation*}
\mathrm{D}_{\rt, f}(p\|q)=\int_\Omega \hat f'(\frac{1}{\Phi''_0(x)+1})p(\Phi'_0(x)+x)dx-\int_\Omega \hat f^*(\frac{1}{\Phi''_0(x)+1})q(x)dx. 
\end{equation*}
\end{theorem}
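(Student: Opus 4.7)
The proof is essentially a change-of-variables exercise that translates quantities expressed through the optimal transport map $T$ into quantities expressed through the Kantorovich potential $\Phi_0$, using formula \eqref{Phi} as the bridge.

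First I would read off the relationship between $T$ and $\Phi_0$ directly from equation \eqref{Phi}. Since $\Phi_0(x)=\int_0^x T(y)\,dy-\tfrac{|x|^2}{2}+c_0$, a single differentiation gives $\Phi_0'(x)=T(x)-x$, hence $T(x)=\Phi_0'(x)+x$. A second differentiation yields $\Phi_0''(x)=T'(x)-1$, hence
\begin{equation*}
T'(x)=\Phi_0''(x)+1,\qquad \frac{1}{T'(x)}=\frac{1}{\Phi_0''(x)+1}.
\end{equation*}
These two identities are the only geometric input needed.

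Next I would invoke the optimality condition from the previous theorem. As derived in its proof, the optimal dual variable satisfies $\Psi_{\mathrm{opt}}(x)=\hat f'\bigl(1/T'(x)\bigr)$. Substituting the identity $1/T'(x)=1/(\Phi_0''(x)+1)$ immediately produces the first claimed formula $\Psi_{\mathrm{opt}}(x)=\hat f'\bigl(1/(\Phi_0''(x)+1)\bigr)$.

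Finally, I would plug everything into the variational representation \eqref{Psi}. Using $T(x)=\Phi_0'(x)+x$ in the first integral and the expression above for $\Psi_{\mathrm{opt}}$ in both integrals gives the reformulation
\begin{equation*}
\mathrm{D}_{\rt,f}(p\|q)=\int_\Omega \hat f'\!\left(\tfrac{1}{\Phi_0''(x)+1}\right) p(\Phi_0'(x)+x)\,dx-\int_\Omega \hat f^*\!\left(\tfrac{1}{\Phi_0''(x)+1}\right)q(x)\,dx.
\end{equation*}
The only step that requires a brief justification is the form of the second integrand: by the Legendre--Fenchel identity applied at the optimum, $\hat f^*(\Psi_{\mathrm{opt}}(x))$ agrees with $\hat f^*\bigl(1/(\Phi_0''(x)+1)\bigr)$ in the sense that both produce the same value of $\mathrm{D}_{\rt,f}(p\|q)$ via \eqref{Psi}; more concretely, substituting $\Psi_{\mathrm{opt}}=\hat f'(1/T')$ into \eqref{Psi} and using $\hat f^*(\hat f'(u))=u\hat f'(u)-\hat f(u)$ collapses the two integrals back to $\int \hat f(1/T')q\,dx=\int f(T')q\,dx$, confirming consistency with equation \eqref{f-div-3}.

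The proof contains no real analytical difficulty; the main thing to be careful about is the notational reconciliation between the statement $\Psi_{\mathrm{opt}}=\hat f'(T')$ appearing in \eqref{Psi_opt} and the expression $\Psi_{\mathrm{opt}}=\hat f'(1/T')$ produced by the optimality computation, and correctly identifying which of these is to be substituted into the reformulated Kantorovich expression. Once this is pinned down, the proof reduces to the two differentiations of $\Phi_0$ and a direct substitution.
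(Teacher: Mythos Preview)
Your proposal is correct and follows essentially the same route as the paper: differentiate the Kantorovich potential $\Phi_0$ once to get $T(x)=\Phi_0'(x)+x$, differentiate again to get $T'(x)=\Phi_0''(x)+1$, and substitute into the optimality condition and the variational formula from the preceding theorem. Your observation about the discrepancy between $\hat f'(T')$ in \eqref{Psi_opt} and $\hat f'(1/T')$ coming out of the optimality computation is well spotted---the paper itself has this inconsistency, and the form $\Psi_{\mathrm{opt}}=\hat f'(1/T')$ is the one that makes the present theorem's statement come out correctly; likewise, the second integrand $\hat f^*\bigl(1/(\Phi_0''+1)\bigr)$ should really read $\hat f^*\bigl(\Psi_{\mathrm{opt}}\bigr)=\hat f^*\bigl(\hat f'(1/(\Phi_0''+1))\bigr)$, which your Legendre--Fenchel remark implicitly acknowledges.
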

\begin{proof}
The proof follows from a direct calculation. From Kantorovich duality, we have $\Phi'_0(x)=T(x)-x$. By taking the derivative in above formula, we have
\begin{equation*}
\Phi''_0(x)=T'(x)-1. 
\end{equation*}
Substituting the above formula into \eqref{Psi_opt} and \eqref{Psi}, we finish the proof.
\end{proof}
\subsection{Local behaviors and Taylor expansions}
We last formulate the local behaviors and Taylor expansions of transport $f$-divergences. 
\begin{theorem}[Local behaviors]\label{thm2}
Assume $f\in C^2$ and $f(1)=f'(1)=0$, then 
\begin{equation*}
\lim_{\lambda\rightarrow 0}\frac{1}{\lambda^2}\mathrm{D}_{\mathrm{T},f}(p_\lambda\|q)=\frac{f''(1)}{2}\int_\Omega | T'(x)-1|^2q(x)dx,
\end{equation*}
where $p_\lambda\in\mathcal{P}(\Omega)$, $\lambda\in [0,1]$, is the geodesic in Wasserstein-$2$ space connecting probability densities $q$, $p$. In other words, 
\begin{equation*}
p_\lambda=\Big((1-\lambda)\mathrm{id}+\lambda T\Big)_\#q,
\end{equation*}
where $\mathrm{id}(x):=x$ is an identity mapping function. 
\end{theorem}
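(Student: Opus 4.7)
The plan is to reduce the whole statement to a one-line Taylor expansion of $f$ at the point $1$, using the equivalent formulation
$\mathrm{D}_{\mathrm{T},f}(p\|q)=\int_\Omega f(T'(x))\,q(x)dx$ from Proposition~\ref{prop1}(i). The key preliminary observation is that the map
$$T_\lambda(x):=(1-\lambda)x+\lambda T(x)$$
is itself monotone increasing on $\Omega=\mathbb{R}^1$, being a convex combination of two monotone functions. Since in one dimension monotone rearrangement is the optimal transport map, $T_\lambda$ is the monotone mapping pushing $q$ forward to $p_\lambda$. Applying Proposition~\ref{prop1}(i) with $p$ replaced by $p_\lambda$ and $T$ replaced by $T_\lambda$ gives
$$\mathrm{D}_{\mathrm{T},f}(p_\lambda\|q)=\int_\Omega f\bigl(T_\lambda'(x)\bigr)q(x)\,dx=\int_\Omega f\bigl(1+\lambda(T'(x)-1)\bigr)q(x)\,dx.$$

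Second, I would Taylor-expand the integrand around $1$. Since $f\in C^2$ with $f(1)=f'(1)=0$, for each fixed $x$ we have
$$f\bigl(1+\lambda(T'(x)-1)\bigr)=\tfrac{f''(1)}{2}\lambda^2\bigl(T'(x)-1\bigr)^2+r_\lambda(x),\qquad \lambda^{-2}r_\lambda(x)\xrightarrow[\lambda\to 0]{}0,$$
with remainder bounded pointwise by $\tfrac{1}{2}\omega(\lambda)\lambda^2 (T'(x)-1)^2$, where $\omega(\lambda):=\sup_{|s-1|\le \lambda\|T'-1\|_\infty}|f''(s)-f''(1)|\to 0$. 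Dividing by $\lambda^2$ and letting $\lambda\to 0$,
$$\frac{1}{\lambda^2}\mathrm{D}_{\mathrm{T},f}(p_\lambda\|q)=\frac{f''(1)}{2}\int_\Omega (T'(x)-1)^2 q(x)\,dx+\frac{1}{\lambda^2}\int_\Omega r_\lambda(x)q(x)\,dx,$$
and the second term vanishes, giving the asserted identity.

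The only technical obstacle is justifying the passage to the limit inside the integral. Under the standing smoothness assumptions on $p$, $q$ (so that $T'$ is continuous and locally bounded), restricting to a compact set where $q$ is concentrated makes $T'-1$ bounded, and continuity of $f''$ on the compact interval containing the range of $1+\lambda(T'(x)-1)$ for small $\lambda$ yields a uniform bound on the remainder, so dominated convergence applies. Noting that $f''(1)\geq 0$ (from convexity of $f$ together with $f(1)=f'(1)=0$) confirms that the limit is a nonnegative quadratic form in $T'-1$, consistent with the interpretation as a Hessian of $\mathrm{D}_{\mathrm{T},f}$ along the Wasserstein-$2$ geodesic.
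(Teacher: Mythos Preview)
Your proof is correct and follows essentially the same route as the paper: write $\mathrm{D}_{\mathrm{T},f}(p_\lambda\|q)=\int_\Omega f\bigl(1+\lambda(T'(x)-1)\bigr)q(x)\,dx$ via the Jacobian formulation and then Taylor-expand $f$ at $1$ using $f(1)=f'(1)=0$. If anything, you are more explicit than the paper in justifying that $T_\lambda$ is indeed the monotone map and in controlling the remainder via dominated convergence, whereas the paper simply records the expansion as $o(\lambda^2)$.
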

\begin{proof}
By the definition of $p_\lambda$, we have 
\begin{equation*}
\begin{split}
\mathrm{D}_{\mathrm{T}, f}(p_\lambda\|q)=&\int_\Omega f\big((1-\lambda)+\lambda T'(x)\big)q(x)dx\\
=&\int_\Omega f\big(1+\lambda( T'(x)-1)\big)q(x)dx.
\end{split}
\end{equation*}
By the Taylor expansion of function $f\big((1-\lambda)+\lambda T'(x)\big)$, we have 
\begin{equation*}
\begin{split}
\mathrm{D}_{\mathrm{T}, f}(p_\lambda\|q)=&\int_\Omega\Big(f(1)+\lambda f'(1)( T'(x)-1)+\frac{\lambda^2f''(1)}{2}| T'(x)-1|^2\Big) q(x)dx+o(\lambda^2)\\
=& \frac{\lambda^2f''(1)}{2}\int_\Omega| T'(x)-1|^2q(x)dx+o(\lambda^2).
\end{split}
\end{equation*}
In above derivations, we apply the fact that $f(1)=f'(1)=0$ in the second equality. This finishes the proof. 
\end{proof}

\begin{theorem}[Taylor expansions in Wasserstein-$2$ space]
Assume $f\in C^4$ and $f(1)=f'(1)=0$. Then the following equation holds:
\begin{equation*}
\begin{split}
\mathrm{D}_{\rt,f}(p\|q)=&\int_0^1 \Big[\frac{f''(1)}{2}|\frac{Q'_p(u)-Q'_q(u)}{Q'_q(u)}|^2+\frac{f'''(1)}{6} \Big(\frac{Q'_p(u)-Q'_q(u)}{Q'_q(u)}\Big)^3\Big]du\\
&+O(\int_0^1 |\frac{Q'_p(u)-Q'_q(u)}{Q'_q(u)}|^4 du). 
\end{split}
\end{equation*}
We also represent the above formula in terms of Kantorovich duality variable $\Phi_0$ defined in \eqref{Phi}.  
\begin{equation*}
\begin{split}
\mathrm{D}_{\rt,\alpha}(p\|q)=&\int_\Omega\Big[\frac{f''(1)}{2}|\Phi_0''(x)|^2+\frac{f'''(1)}{6}(\Phi_0''(x))^3\Big]q(x)dx\\
&+O(\int_\Omega |\Phi_0''(x)|^4q(x)dx). 
\end{split}
\end{equation*}
\end{theorem}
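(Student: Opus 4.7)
The plan is to reduce both expansions to a single-variable Taylor expansion of $f$ at the point $1$ and then to translate between the quantile picture and the Kantorovich picture using identities already established earlier in the paper. The natural starting point is formulation \eqref{f-div} from Proposition~\ref{prop1}(ii), which writes $\mathrm{D}_{\rt,f}(p\|q)$ as a one-dimensional integral on $[0,1]$ in the uniform variable $u$. This removes the density $q$ from the picture and turns the claim into a routine Taylor estimate.

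With this formulation in hand, I would set
\[
t(u):=\frac{Q'_p(u)-Q'_q(u)}{Q'_q(u)},
\]
so that $Q'_p(u)/Q'_q(u)=1+t(u)$, and apply Taylor's theorem to $f$ about $1$. Since $f\in C^4$ with $f(1)=f'(1)=0$, one obtains
\[
f(1+t)=\frac{f''(1)}{2}t^2+\frac{f'''(1)}{6}t^3+R(t),
\]
with remainder $|R(t)|\leq \tfrac{1}{24}\|f^{(4)}\|_{\infty,K}\,|t|^4$ on any compact interval $K$ containing the range of $1+t(u)$. Integrating term by term in $u$, applying \eqref{f-div}, and packaging the remainder into an $O\!\bigl(\int_0^1 |t(u)|^4\,du\bigr)$ term yields the first claimed expansion in terms of quantile densities.

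For the Kantorovich version, I would reuse the chain-rule calculation already performed in the proof of Proposition~\ref{prop1}(ii): under the substitution $u=F_q(x)$, one has
\[
\frac{Q'_p(u)}{Q'_q(u)}=T'(x),\qquad du=q(x)\,dx.
\]
Combined with the identity $\Phi''_0(x)=T'(x)-1$ derived in the preceding theorem on transport $f$-Kantorovich dualities, this gives $t(u)=\Phi''_0(x)$ at corresponding points. Substituting these into the first expansion converts every integral $\int_0^1(\cdot)\,du$ into the $q$-weighted integral $\int_\Omega(\cdot)\,q(x)\,dx$ and produces the $\Phi_0$-form of the Taylor expansion.

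The main technical obstacle is justifying the remainder as a genuine $O(\cdot)$ with a dimension-free constant, rather than merely an $o(\cdot)$. A clean way to handle this is to work under the implicit hypothesis that $T'(x)$ stays in a compact subinterval of $(0,\infty)$, so that $1+t(u)$ lives in a set $K$ on which $|f^{(4)}|$ is uniformly bounded by some $C_f$; the remainder then integrates to at most $(C_f/24)\int_0^1 t(u)^4\,du$, which is exactly the advertised $O$-term. This compactness condition is natural in the regime the theorem is designed for, namely when $p$ is Wasserstein-$2$ close to $q$ so that $T'$ is uniformly close to $1$.
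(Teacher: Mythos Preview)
Your proposal is correct and follows essentially the same route as the paper: start from the quantile formulation \eqref{f-div}, write the argument of $f$ as $1+h(u)$ with $h(u)=(Q'_p-Q'_q)/Q'_q$, Taylor-expand $f$ at $1$ using $f(1)=f'(1)=0$, and then pass to the $\Phi_0$ version via the change of variables $u=F_q(x)$ together with $\Phi_0''(x)=T'(x)-1$. Your treatment of the remainder (bounding $|R(t)|$ via $\|f^{(4)}\|_{\infty,K}$ under a compactness assumption on the range of $T'$) is in fact more explicit than what the paper writes.
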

\begin{proof}
The proof is based on a direct calculation. Firstly, from quantile density function formulation \eqref{f-div}, 
we have 
\begin{equation*}
\mathrm{D}_{\rt,f}(p\|q)=\int_0^1 f(1+h(u))du,
\end{equation*}
where $h(u):=\frac{Q'_p(u)-Q'_q(u)}{Q'_q(u)}$. From the Taylor expansion of function $f$ at $1$ and $f'(1)=f''(1)=0$, we have
\begin{equation*}
f(1+h(u))=\frac{1}{2}f''(1)h(u)^2+\frac{1}{6}f'''(1)h(u)^3+O(|h(u)|^4).
\end{equation*}
Secondly, we note that the Kantorovich duality condition \eqref{Phi} holds. Denote $\Phi'_0(x)=T(x)-x=Q_p(F_q(x))-x$, and 
\begin{equation*}
\Phi_0''(x)=\frac{d}{dx}Q_p(F_q(x))-1=\frac{Q'_p(F_q(x))}{\frac{1}{q(x)}}-1. 
\end{equation*}
Denote $u=F_q(x)$. Then we have
\begin{equation*}
\int_\Omega (\Phi''(x))^kp(x)dx=\int_0^1 (\frac{Q'_p(u)}{Q'_q(u)}-1)^k du, 
\end{equation*}
for $k=2,3$. This finishes the proof. 
\end{proof}

\section{Examples}\label{sec4}
In this section, we list several examples of transport $f$-divergences.  
\begin{example}[Transport total variation]
Let $f(u)=|u-1|$, then 
\begin{equation*}
\begin{split}
\mathrm{D}_{\rt,f}(p\|q)=\mathrm{D}_{\mathrm{TTV}}(p\|q)=&\int_\Omega |T'(x)-1|q(x)dx\\
=&\int_0^1|\frac{Q'_p(u)}{Q'_q(u)}-1|du.  
\end{split}
\end{equation*}
We call $\mathrm{D}_{\mathrm{TTV}}$ the transport total variation (TTV). Unfortunately, it is not a distance, since $\mathrm{D}_{\mathrm{TTV}}(p\|q)\neq \mathrm{D}_{\mathrm{TTV}}(q\|p)$.
\end{example}
\begin{example}
Let $f(u)=|u-1|^2$, then 
\begin{equation*}
\begin{split}
\mathrm{D}_{\rt,f}(p\|q)=&\int_\Omega |T'(x)-1|^2q(x)dx\\
=&\int_0^1|\frac{Q'_p(u)}{Q'_q(u)}-1|^2du.
\end{split}
\end{equation*}
\end{example}
\begin{example}[Squared transport Hessian distance \cite{LiHess1,LiHess2}]
Let $f(u)=|\log u|^2$, then 
\begin{equation*}
\begin{split}
\mathrm{D}_{\rt,f}(p\|q)=\mathrm{Dist}_{\mathrm{TH}}(p,q)^2=&\int_\Omega |\log T'(x)|^2q(x)dx\\
=&\int_0^1|\log \frac{Q'_p(u)}{Q'_q(u)}|^2du.
\end{split}
\end{equation*}
We call $\mathrm{Dist}_{\mathrm{TH}}(p\|q)$ the transport Hessian distance; see derivations in \cite{LiHess2}. \end{example}
\begin{example}[Transport KL divergence \cite{LiD}]
Let $f(u)=u-\log u-1$, then 
\begin{equation*}
\begin{split}
\mathrm{D}_{\rt,f}(p\|q)=\mathrm{D}_{\mathrm{TKL}}(p\|q)=&\int_\Omega \Big( T'(x)-\log T'(x)-1\Big)q(x)dx\\
=&\int_0^1\Big(\frac{Q'_p(u)}{Q'_q(u)}-\log\frac{Q'_p(u)}{Q'_q(u)}-1\Big)du. 
\end{split}
\end{equation*}
We call $\mathrm{D}_{\mathrm{TKL}}$ the transport KL divergence (TKL). It is the Bregman divergence of the negative Boltzmann-Shannon entropy in Wasserstein-$2$ space.
\end{example}

\begin{example}[Transport Jenson-Shannon divergence \cite{LiD}]
Let $f(u)=-\frac{1}{2}\log\frac{u}{\frac{1}{4}|u+1|^2}$, then 
\begin{equation*}
\begin{split}
\mathrm{D}_{\rt,f}(p\|q)=\mathrm{D}_{\mathrm{TJS}}(p\|q)=&-\frac{1}{2}\int_\Omega \log\frac{ T'(x)}{\frac{1}{4}| T'(x)+1|^2} q(x)dx\\
=& -\frac{1}{2}\int_0^1\log\frac{Q'_p(u)\cdot Q'_q(u)}{\frac{1}{4}|{Q'_p(u)}+{Q'_q(u)}|^2}du.
\end{split}
\end{equation*}
We name it the transport Jenson-Shannon divergence (TJS). In fact, the TJS is a symmetrized transport KL divergence, i.e. 
\begin{equation*}
\mathrm{D}_{\mathrm{TJS}}(p\|q)=\frac{1}{2}\Big(\mathrm{D}_{\mathrm{TKL}}(p\|p_{\frac{1}{2}})+\mathrm{D}_{\mathrm{TKL}}(q\|p_{\frac{1}{2}})\Big),
\end{equation*}
where $p_{\frac{1}{2}}=(\frac{1}{2}\mathrm{id}+\frac{1}{2}T)_\#q$ is the geodesic midpoint (Barycenter) between densities $p$, $q$ in Wasserstein-$2$ space. 
\end{example}

\begin{example}[Transport $\alpha$-divergences \cite{LiAlpha}]
Let 
\begin{equation*}
f_\alpha(u)=\left\{\begin{aligned}&\frac{1}{\alpha^2}(u^{\alpha}-\alpha\log u-1), &\quad \alpha\neq 0;\\
&\frac{1}{2}|\log u|^2,&\quad \alpha=0.
\end{aligned}\right.
\end{equation*}
Then 
\begin{equation*}
\begin{aligned}
\mathrm{D}_{\rt,f}(p\|q)=\mathrm{D}_{\rt,\alpha}(p\|q)=&\frac{1}{\alpha^2}\int_\Omega \Big(( T'(x))^{\alpha}-\alpha\log T'(x)-1\Big)q(x)dx\\
=&\frac{1}{\alpha^2}\int_0^1\Big(\big(\frac{ Q'_p(u)}{Q'_q(u)}\big)^{\alpha}-{\alpha}\log\frac{Q'_p(u)}{Q'_q(u)}-1\Big)du.
\end{aligned}
\end{equation*}
We call $\mathrm{D}_{\rt, \alpha}$ the transport $\alpha$-divergence. The divergence $\mathrm{D}_{\rt, \alpha}$ is a generalization of transport KL divergence. E.g. if $\alpha=1$, we have $\mathrm{D}_{\rt,1}(p\|q)=\mathrm{D}_{\mathrm{TKL}}(p\|q)$. If $\alpha=0$, we obtain $\mathrm{D}_{\rt, 0}(p\|q)=\frac{1}{2}\mathrm{D}_{\mathrm{TH}}(p,q)^2$. Transport $\alpha$-divergences are related with transport Hessian metric structures \cite{LiAlpha}, which are analogs of information geometry methods in Wasserstein-2 spaces; see \cite{AC, IG2, CA}. 
\end{example}
\section{Examples}\label{sec5}
In this section, we present two analytical examples of transport $f$-divergences in either location scale families, or generative models. 
\begin{example}[Location scale family]
Let $p_X$, $p_Y$ be two one-dimensional local scale probability densities. Suppose
\begin{equation*}
X\sim p_X,\quad Y:=T(X)=\mu_Y+\frac{\sigma_Y}{\sigma_X}(X-\mu_X)\sim p_Y,
\end{equation*}
where $\mu_X, \mu_Y\in\mathbb{R}$, $\sigma_X, \sigma_Y>0$ are mean values and standard derivations of random variables $X$, $Y$, respectively. In this case, $ T'(x)=\frac{\sigma_Y}{\sigma_X}$. Hence the transport $f$-divergence forms  
\begin{equation*}
\mathrm{D}_{\rt,f}(p_X\|p_Y)=f(\frac{\sigma_Y}{\sigma_X}).
\end{equation*}
We note that the transport $f$-divergence does not depend on the location variable of probability density functions. 
\end{example}
\begin{example}[Generative family]\label{exp13}
Suppose $\Omega=\mathbb{R}^1$. Let $p_X$, $p_Y$ be constructed from generative models. Consider a latent random variable $Z\sim p_{\mathrm{ref}}\in\mathcal{P}(\Omega)$, where $p_{\mathrm{ref}}$ is a given smooth reference measure. Denote a smooth invertible map function $G\in C^1(\Omega\times \Theta; \Omega)$, where $\Theta\subset\mathbb{R}^n$, $n\in N$, is a parameter space. Let $\theta_X$, $\theta_Y\in\Theta$ and consider 
\begin{equation*}
X=G(Z, \theta_X)\sim p_X,\quad Y=G(Z, \theta_Y)\sim p_Y. 
\end{equation*}
By Proposition \ref{prop1} (iii), the transport $f$-divergences satisfies 
\begin{equation*}
\mathrm{D}_{\rt,f}(p_X\|p_Y)=\mathbb{E}_{Z\sim p_{\mathrm{ref}}}\Big[f(\frac{\partial_ZG(Z, \theta_X)}{\partial_ZG(Z, \theta_Y)})\Big].
\end{equation*}
We remark that transport $f$-divergences depend on the derivative of generative mapping functions with respect to the input variable $Z$. 
\end{example}
\section{Discussion}
In this paper, we propose a class of transport $f$-divergences. The proposed transport type divergence functionals are built from the derivative of pushforward mapping functions. These divergence functionals have convexity properties in terms of mapping functions, which contrasts with $f$-divergences. 

The study of transport $f$-divergences in multi-dimensional sample space are left in future works. In general, transport $f$-divergences can be defined from ``matrix divergences'', where the matrix refers to the Jacobian matrix of pushforward mapping function; see \cite{LiD}. We shall also investigate the convexity properties, inequalities, and variational algorithms for transport $f$-divergences towards generative AI-related sampling problems and Bayesian inverse problems.

\noindent\textbf{Acknowledgements}. {W. Li's work is supported by AFOSR YIP award No. FA9550-23-1-0087, NSF RTG: 2038080, and NSF DMS: 2245097.}

\end{document}